\newtheorem{thm}{Theorem}[section]
\newtheorem{rmk}[thm]{Remark}
\newtheorem{defn}[thm]{Definition}
\newtheorem{prop}[thm]{Proposition}
\newtheorem{lem}[thm]{Lemma}
\theoremstyle{plain}
\newtheorem{maintheorem}{Theorem}
\newtheorem{main-cor}[maintheorem]{Corollary}
\numberwithin{equation}{section}
\renewcommand*{\backref}[1]{}
\renewcommand*{\backrefalt}[4]{%
    \ifcase #1 (Not cited.)%
    \or        (Cited on page~#2.)%
    \else      (Cited on pages~#2.)%
    \fi}
\address{Department of Mathematics, 
            Southern University of Science and Technology, 
            Shenzhen, Guangdong 518055, China}
\email{xiamy@sustech.edu.cn}
\title[Kan-type transitivity]{Topological transitivity of Kan-type partially hyperbolic diffeomorphisms}
\author{Mingyang Xia}
\date{\today}
\subjclass{Primary: 37D30; Secondary: 37D05, 37E05.}
\keywords{Kan example, topological transitivity, partially hyperbolic diffeomorphism, skew-product, intermingled basins.}
\begin{document}

\begin{abstract}
	We present the topological transitivity of
	a class of diffeomorphisms on the thickened torus,
	including the partially hyperbolic example introduced by Ittai Kan in 1994,
	which is well known for the first systems with the intermingled basins phenomenon.
\end{abstract}

\maketitle

\section{Introduction}

In 1994, Kan \cite{Kan94} constructed two important partially hyperbolic systems,
which inspired a lot of studies from the statistical and topological viewpoints.

Precisely speaking, for the non-invertible case,
Kan constructed a $C^\infty$ skew-product map $f:M\to M$
on the 2-dimensional cylinder $M=\mathbb{S}^1\times[0,1]$ defined by
\begin{equation}\label{Ex-endo}
	f(x,t)=\left(3x, t+\frac{t(1-t)}{32}\cos(2\pi x)\right).
\end{equation}
For the invertible case,
Kan constructed a $C^\infty$ skew-product diffeomorphism
on the thickened torus $M=\mathbb{T}^2\times[0,1]$ defined by
\begin{equation}\label{Ex-diffeo}
	f(x,y,t)=\left(3x+y, 2x+y, t+\frac{t(1-t)}{32}\cos(2\pi x)\right).
\end{equation}

From the statistical viewpoint, it is well known that
Kan's constructions admit two physical measures whose basins are intermingled.
Recall that for a $C^2$ map $f$ on a compact manifold $M$,
the basin $\mathcal{B}(\mu)$ of an $f$-invariant Borel probability measure $\mu$ is defined by
\begin{displaymath}
	\mathcal{B}(\mu)= \left\{ x\in M :
	\lim\limits_{n\to\infty}
	\frac{1}{n}  \sum_{k=0}^{n-1}\phi(f^k(x)) =\int_{M}\phi\ d\mu
	~~{\rm for}~{\rm every}~ \phi \in C^0(M,\mathbb{R})   \right\}.
\end{displaymath}
Then, $\mu$ is called a \emph{physical measure} (or \emph{Sinai-Ruelle-Bowen} measure)
if the basin $\mathcal{B}(\mu)$ has positive Lebesgue measure.
The existence and finiteness of physical measures
are crucial in the study of statistical behavior of dynamical systems.
As is shown by Sinai, Ruelle and Bowen \cite{Si72,Ru76,BR75} that
uniformly hyperbolic systems only have finitely many physical measures
while the union of their basins has full Lebesgue measure in the ambient manifold.
See  \cite{CYZ20,HYY20,CMY22} for recent advances with weak hyperbolicity.
Moreover, two physical measures $\mu_1$ and $\mu_2$ have
\emph{intermingled basins}
if for any non-empty open set $U\subseteq M$,
\begin{displaymath}
	Leb(\mathcal{B}(\mu_1)\cap U)>0 \quad\text{and}\quad Leb(\mathcal{B}(\mu_2)\cap U)>0 .
\end{displaymath}
Kan's examples continue to be a source of interesting research in dynamical systems,
which have been extensively studied, especially in the aspect of intermingled basins,
see \cite{MW05,DVY16,BP18,UV18} for recent developments.

From the perspective of topology,
it has been shown that a typical class of Kan's non-invertible systems is indecomposed
by appending two conditions (see \cite[Proposition 11.2]{BDV05}).
In fact, it is proved this kind of cylinder endomorphisms is \emph{topologically transitive},
that is, for any two nonempty open sets $U,V\subseteq M$,
there is a positive integer $m$ such that $f^m(U)\cap V\neq\varnothing.$
Recently, Gan and Shi \cite{GS19} show that Kan-type cylinder endomorphisms,
including  Example \eqref{Ex-endo},
are robustly topologically mixing within the $C^2$ boundary preserving maps.

These facts indeed indicate an interesting difference
between the measure theory and the topology:
these constructions admit two intermingled physical measures
but are topologically indecomposed.
This makes Kan's examples more important to some extent.
The contrast of measurable and topological properties
is an interesting subject for the study of dynamics.
Very recently, there are some beautiful results
focusing on topological transitivity of skew-products (see \cite{Ok17,CO21})
and measures of maximal entropy for some general systems
related to the Kan's endomorphism (see \cite{NBRV21,RT22}).

As for the situation of Kan's invertible systems,
there are few studies related to Kan's constructions.
In contrast to the robust manner of the intermingled basins (see \cite{IKS08}),
in 2018, Ures and V\'{a}squez \cite{UV18} establish the non-robust intermingled basins phenomenon
on $\mathbb{T}^3$, providing the constructions are not accessible.
Soon after, appending extra constructions,
a family of topologically transitive diffeomorphisms on $\mathbb{T}^2\times \mathbb{S}^1$
is constructed by inserting a blender in the Kan's example \eqref{Ex-diffeo}
and embedding into boundaryless manifold $\mathbb{T}^2\times \mathbb{S}^1$ (see \cite[Theorem 1.4]{CGS18}).

However, it is still unknown whether the Kan's original
example on $\mathbb{T}^2\times[0,1]$ is topologically transitive or not.
This paper is devoted to giving an affirmative answer
for Kan-type partially hyperbolic diffeomorphisms (see Definition \ref{def:Kan-type})
with a mild resonant condition originated from Kan's constructions.
By this result, we complete the final piece of topological indecomposability of Kan's examples.

\begin{maintheorem}\label{main-thm}
	Let $F:\mathbb{T}^2\times[0,1]\circlearrowleft$ be a
	Kan-type partially hyperbolic diffeomorphism
	with $C^2$ regularity, defined by
	\begin{displaymath}
		F(x,t)=(Ax,\phi_x(t)),
	\end{displaymath}
	where $A:\mathbb{T}^2\circlearrowleft$ is an Anosov toral automorphism
	fixing two points $p,q\in \mathbb{T}^2$ and
	\begin{displaymath}
		\dfrac{\ln\phi_p'(0)}{\ln\phi_q'(0)}\notin\mathbb{Q}.
	\end{displaymath}
	Then, $F$ is topologically transitive.
\end{maintheorem}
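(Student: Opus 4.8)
The plan is to verify transitivity in the form: for all nonempty open $U,V\subseteq\mathbb{T}^2\times[0,1]$ there is $m\geq0$ with $F^{m}(U)\cap V\neq\varnothing$. Since every $\phi_x$ fixes $0$ and $1$, the two boundary tori are $F$-invariant and have empty interior, so I may shrink $U$ and $V$ to boxes $U=U_1\times(a,b)$, $V=V_1\times(c,d)$ with $U_1,V_1$ nonempty open in $\mathbb{T}^2$ and $0<a<b<1$, $0<c<d<1$, and argue inside the invariant open set $\mathbb{T}^2\times(0,1)$. I will use two inputs: (i) $A$, being Anosov, has the shadowing property, its stable and unstable manifolds $W^{s,u}_A(p)$, $W^{s,u}_A(q)$ are dense in $\mathbb{T}^2$, and the mixed intersections (e.g.\ $W^u_A(q)\cap W^s_A(p)$) are nonempty, indeed dense; and (ii) by the Kan-type hypotheses (Definition~\ref{def:Kan-type}) the fiber maps over $p$ and $q$ are of opposite North--South type, which after a relabelling means $\phi_p(t)>t$ on $(0,1)$ with $\phi_p'(0)>1$ and $\phi_q(t)<t$ on $(0,1)$ with $\phi_q'(0)<1$; equivalently (note $\partial_x\phi_x(0)\equiv0$) the points $(p,0)$ and $(q,0)$ are hyperbolic fixed points of $F$ whose fiber direction is expanded by $\phi_p'(0)$, resp.\ contracted by $\phi_q'(0)$, and $\phi_p^{n}(t)\to1$, $\phi_q^{n}(t)\to0$ for every $t\in(0,1)$.

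The core construction produces, for each pair of large integers $A,B\geq0$, an $F$-orbit that begins in $U$, spends $B$ steps near $q$, then (after a transition) $A$ steps near $p$, then (after a transition) ends in $V$. It is obtained by shadowing a pseudo-orbit of $A$ whose transitions run through \emph{fixed} heteroclinic points (in $W^u_A(q)\cap W^s_A(p)$ and the like), so that the transition lengths do not depend on $A$ or $B$; the total length is $m(A,B)=A+B+O(1)$, and the shadowing orbit lands in $V$ provided the pseudo-orbit jumps are small enough. I then track the fiber coordinate using the $C^2$ linearizing charts at the hyperbolic fixed points $(q,0)$ and $(p,0)$: while the orbit sits in such a chart the fiber coordinate is multiplied \emph{exactly} by $\phi_q'(0)$ (resp.\ $\phi_p'(0)$) at each step, and elsewhere it is acted on by fixed diffeomorphisms of $[0,1]$ or is swept monotonically toward $1$ by $\phi_p$ in boundedly many steps. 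Consequently the $B$ steps near $q$ compress the initial interval $(a,b)$ onto a tiny interval of size $\asymp(\phi_q'(0))^{B}$ whose endpoints keep a fixed ratio $>1$, and the $A$ steps near $p$ then carry this tiny interval, up to bounded distortion, onto an interval $I_{A,B}\subseteq(0,1)$ which is the $A$-th $\phi_p$-image of $\mathrm{const}\cdot(\phi_q'(0))^{B}J$ for a fixed interval $J$ with closure in $(0,\infty)$.

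Here the resonance hypothesis enters decisively. With $\alpha=\ln\phi_p'(0)>0$ and $\beta=-\ln\phi_q'(0)>0$, the assumption $\alpha/\beta\notin\mathbb{Q}$ makes $\{\,B\beta-A\alpha:A,B\in\mathbb{Z}_{\geq0}\,\}$ dense in $\mathbb{R}$. Reading off the description of $I_{A,B}$: to hit a target $\tau\in(0,1)$ one first makes the compressed interval as small as wished by taking $B$ large, and then chooses $A$ so that the $\phi_p$-sweep carries a point of it onto $\tau$; on the logarithmic scale this reduces exactly to asking that $B\beta-A\alpha$ lie in a fixed interval of \emph{positive} length — the length being essentially the logarithmic width of the compressed interval, which is positive precisely because $U$ is a genuine open box — and the density statement supplies suitable $A,B$. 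Hence $\bigcup_{A,B}I_{A,B}$ is dense in $(0,1)$; applying this with $\tau$ in the fixed open preimage of $(c,d)$ under the last transition diffeomorphism yields $A,B$ for which the constructed orbit ends in $V_1\times(c,d)=V$, so $F^{m(A,B)}(U)\cap V\neq\varnothing$, which would finish the proof.

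The step I expect to be the real obstacle is the fiber estimate along the long near-$p$ and near-$q$ excursions: a priori the $C^1$-discrepancy between $\phi_w$ and $\phi_p$ for $w$ merely close to $p$ is amplified by $\phi_p'(0)$ at each of the $A$ steps and could drown the $A\alpha$ signal. The way around this — the technical heart — is that the only genuinely long part of each excursion lies inside a \emph{fixed} linearizing chart of the hyperbolic fixed point, where the fiber coordinate evolves by an exact linear map with no accumulating error, while the moderate-$t$ part, where the nonlinearity of $\phi_p$ matters, lasts boundedly many steps because $\phi_p$ moves points bounded away from $\{0,1\}$ at a definite rate. Making this rigorous — fixing the admissible jump size and transition lengths, ensuring the orbit really stays in the charts for the required times (which forces the fiber value entering the $q$-chart to be small, arranged by lengthening the first transition), and checking the elementary density of $\{B\beta-A\alpha\}$ — is where the work concentrates; relative to the known endomorphism case, the Anosov (rather than expanding) base is what compels one to route the itineraries along stable and unstable manifolds instead of simply projecting to the base. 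I note finally that a purely soft argument does not seem to suffice: the dense invariant sets one might try to accumulate onto by an inclination lemma, namely the unstable sets of the fiber-expanding fixed points $(p,0)$ and $(q,1)$, lie over the invariant boundary tori and are not met by the forward images of a plaque interior to $\mathbb{T}^2\times(0,1)$, so the fiber coordinate must be actively driven — which is exactly what the resonance condition makes possible.
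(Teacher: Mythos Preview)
Your overall architecture---route the base orbit along heteroclinic connections so it lingers near $q$ for $B$ steps and near $p$ for $A$ steps, then use the irrationality of $\ln\phi_p'(0)/\ln\phi_q'(0)$ to place the fiber image wherever you want---matches the paper's strategy. The gap is precisely where you flag the ``real obstacle'' and then claim to dissolve it. You assert that inside a $C^2$ linearizing chart of $F$ at the hyperbolic fixed point $(q,0)$ (resp.\ $(p,0)$) ``the fiber coordinate is multiplied \emph{exactly} by $\phi_q'(0)$ at each step.'' But no such three-dimensional linearization is available: Sternberg's theorem in dimension $\geq 2$ requires non-resonance relations among the eigenvalues $\|A\|$, $\|A\|^{-1}$, $\phi_q'(0)$, which are not part of the hypotheses, and Hartman--Grobman gives only a $C^0$ conjugacy that does not control multiplicative rates. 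Even if a linearization existed, it would mix base and fiber, so ``the fiber coordinate'' in the chart need not be the original $t$. Meanwhile, along your shadowing orbit the base points $x_i$ are only close to $q$, so the fiber evolves under $\phi_{x_i}$, not $\phi_q$; the claim that this produces \emph{no accumulating error} is exactly what must be proved and is not delivered by the chart argument you propose.

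The paper's proof resolves this by never leaving the genuine center leaves $I_p$ and $I_q$: it first pushes $U$ by iterates and the strong \emph{stable} holonomy $H^s_p$ onto an interval $J_p\subset I_p$, and $V$ by the strong \emph{unstable} holonomy $H^u_q$ onto $J_q\subset I_q$, so the long excursions are governed by the \emph{exact} one-dimensional maps $\phi_p$, $\phi_q^{-1}$ (to which one-dimensional Sternberg applies). The passage between $I_p$ and $I_q$ is a single $C^1$ holonomy $H=H^s_q\circ H^u_p$, whose $C^1$ regularity comes from the Pugh--Shub--Wilkinson theorem under the center-bunching implied by $(K_3)$. This $C^1$ holonomy is the rigorous incarnation of the ``no accumulating error'' statement you want: it packages the cumulative discrepancy between $\phi_{x_i}$ and $\phi_q$ along an asymptotic orbit into a single differentiable map. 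Proposition~\ref{prop-Duminy-id} then handles the resulting one-dimensional problem $(h\circ f^{k})(I)\cap g^{l}(J)\neq\varnothing$, including a quantitative lower bound on the intersection proportion that is needed in the final distortion step. Your shadowing itinerary is morally the same object, but to make it rigorous you would have to reprove the $C^1$ holonomy regularity rather than invoke a nonexistent three-dimensional linearization.
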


This result also holds for the case
$A$ is an Anosov diffeomorphism on $\mathbb{T}^n$ based on the  same argument.
We show just  the bare bone proof to make it more clear.
Comparing to the argument in \cite[Proposition 11.2]{BDV05},
we keep the original assumptions on the Kan's examples without additional ones.
Moreover, there is a property behind the Kan's examples,
called mostly contracting (see \cite{Kan94,ABV00,BV00}), which requires
\begin{displaymath}
	\int_{\mathbb{T}^2}\phi_x'(0)dx<0
	\quad\text{and}\quad
	\int_{\mathbb{T}^2}\phi_x'(1)dx<0.
\end{displaymath}
As stated in Definition \ref{def:Kan-type}, we do not need this condition either.

Naturally, the Example \eqref{Ex-diffeo} belongs to
the category of Kan-type partially hyperbolic diffeomorphisms,
so there is the following result as a corollary.

\begin{main-cor}\label{main-cor}
	The Kan's example on $\mathbb{T}^2\times[0,1]$
	\begin{displaymath}
		f(x,y,t)=\left(3x+y, 2x+y, t+\frac{t(1-t)}{32}\cos(2\pi x)\right)
	\end{displaymath}
	is topologically transitive.
\end{main-cor}
\begin{proof}[Proof of Corollary \ref{main-cor}]
	It suffices to prove the irrational condition of Theorem \ref{main-thm}.
	Note that there are two fixing points $p=({1}/{2},0)$ and $q=(0,0)$
	with $\phi_p'(0)={31}/{32}$ and $\phi_q'(0)={33}/{32}$,
	which have the irrational relation.
	If not, we have
	\begin{displaymath}
		\dfrac{\ln\phi_p'(0)}{\ln\phi_q'(0)}
		=\dfrac{\ln31-\ln32}{\ln33-\ln32}=-\dfrac{m}{n},
	\end{displaymath}
	where $m,n\in\mathbb{N}$ and $\gcd(m,n)=1$.
	Rewriting this equality, we just get
	\begin{displaymath}
		3^m\cdot 11^m\cdot 31^n=2^{5m+5n},
	\end{displaymath}
	which is an obvious contradiction by the fundamental theorem of arithmetic.
\end{proof}

\section{Preliminaries}\label{sec-pre}

In this section, we give some definitions and results
that will be used in this paper.

\subsection{Kan-type partially hyperbolic diffeomorphism}

At the beginning, we recall the definition of partial hyperbolicity.
A $C^1$ diffeomorphism $f: M\to M$ is called (absolutely) partially hyperbolic if
the tangent bundle admits a continuous $Df$-invariant splitting $TM=E^s\oplus E^c \oplus E^u$ such that
$E^s$ is uniformly contracting, $E^u$ is uniformly expanding and the center $E^c$ is intermediate.

Note that there are some characterizations in the Kan's original example \eqref{Ex-diffeo},
so we introduce the following general class of partially hyperbolic diffeomorphisms,
with attempting to capture the main features of Kan's construction on $\mathbb{T}^2\times[0,1]$.

Here recall the pole maps on the unit interval.	
Let $\phi:[0,1]\to[0,1]$ be a diffeomorphism fixing two endpoints:
\begin{itemize}
	\item $\phi$ is called a north–south pole map (abbr. NS-map)
	if $0<\phi'(0)<1<\phi'(1)$ and $\phi(t)<t$ for every $t\in(0,1)$.
	
	\item $\phi$ is called a south–north pole map (abbr. SN-map)
	if $0<\phi'(1)<1<\phi'(0)$ and	$\phi(t)>t$ for every $t\in(0,1)$.
\end{itemize}

\begin{defn}\label{def:Kan-type}
	Let $F:\mathbb{T}^2\times[0,1]\circlearrowleft$ be a $C^2$ skew-product defined by
	\begin{displaymath}
		F(x,t)=(Ax,\phi_x(t)),
	\end{displaymath}
	where $A$ is an Anosov toral automorphism on $\mathbb{T}^2$ fixing two points $p$ and $q$.
	$F$ is called a \emph{Kan-type partially hyperbolic diffeomorphism}
	if the followings are satisfied:
	\begin{enumerate}
		\item[($K_1$)] For any $x\in \mathbb{T}^2$, $\phi_x(0)=0$ and $\phi_x(1)=1$,
		i.e., $F$ preserves the boundary components.
		\item[($K_2$)] $\phi_p$ is an NS-map and $\phi_q$ is an SN-map.
		\item[($K_3$)] For any $(x,t)\in \mathbb{T}^2\times[0,1]$, $\|A^{-1}\|^{-1}< \phi_x'(t) <\|A\|$,
		i.e., $F$ is (absolutely) partially hyperbolic.
	\end{enumerate}
\end{defn}

\subsection{Regularity of the holonomy map}

For the sake of delicate analysis,
some regularity is required for the induced holonomy dynamics to a certain extent.
This leads to the following concepts.

A partially hyperbolic diffeomorphism $f:M\to M$ is called dynamically coherent if
there are two $f$-invariant foliations $\mathcal{F}^{cs}$ and $\mathcal{F}^{cu}$, with $C^1$ leaves,
tangent to $E^s\oplus E^c$ and $E^c\oplus E^u$, respectively.
The foliations $\mathcal{F}^{cs}$ and $\mathcal{F}^{cu}$ are called center-stable and center-unstable foliation, respectively.
A partially hyperbolic diffeomorphism $f:M\to M$ satisfies center bunching condition if
\begin{displaymath}
	\|Df|_{E^s}\| < \dfrac{m(Df|_{E^c})}{\|Df|_{E^c}\|}
	\quad\text{and}\quad
	\|Df|_{E^u}\| > \dfrac{\|Df|_{E^c}\|}{m(Df|_{E^c})}.
\end{displaymath}

Then, as shown in \cite{PSW97}, 
there is an important regularity result.

\begin{thm}[\cite{PSW97}, Theorem B]\label{Thm:C1-holonomy}
	Let $f$ be a $C^2$ partially hyperbolic diffeomorphism
	satisfying the dynamically coherent and center bunching conditions.
	Then, the holonomy map defined by the strong stable/unstable foliation,
	restricted to a center-stable/center-unstable leaf,
	is a $C^1$ local diffeomorphism.
\end{thm}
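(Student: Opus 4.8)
Since Theorem~\ref{Thm:C1-holonomy} is quoted from Pugh--Shub--Wilkinson \cite{PSW97}, the plan is to reproduce the structure of their argument. It suffices to treat the strong stable holonomy restricted to a center-stable leaf, the strong unstable case following by applying the result to $f^{-1}$, with $\mathcal{F}^{cs}$ and $\mathcal{F}^{cu}$ interchanged. First I would fix a center-stable leaf $L=\mathcal{F}^{cs}(z)$: by dynamical coherence $f$ maps $L$ onto another center-stable leaf, the strong stable foliation subfoliates $L$ by $C^1$ leaves tangent to $E^s\subset TL$, and $f|_L$ uniformly contracts these leaves. Given two nearby $C^1$ transversals $\Sigma_1,\Sigma_2\subset L$ to this subfoliation, the goal is to show the holonomy $h=h_{\Sigma_1,\Sigma_2}$ is a $C^1$ local diffeomorphism. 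Since $f$ is $C^2$, the bundles $E^s,E^c$ are H\"older along $L$, and a routine estimate already gives that $h$ is H\"older continuous. The structural input for everything that follows is the conjugacy relation $h_{f\Sigma_1,f\Sigma_2}=f\circ h_{\Sigma_1,\Sigma_2}\circ f^{-1}$, which comes from the $f$-invariance of the two foliations.

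Iterating the conjugacy $n$ times writes $h=f^{-n}\circ h_n\circ f^n$, where $h_n$ is the holonomy between $f^n\Sigma_1$ and $f^n\Sigma_2$; because $f|_L$ contracts strong stable leaves, these transversals collapse together exponentially fast, so on the relevant shrinking scale $h_n$ is well controlled and $h_n\to\mathrm{id}$ in $C^0$. Differentiating the relation formally gives $Dh_x=Df^{-n}|_{h_n(f^n x)}\circ Dh_n|_{f^n x}\circ Df^n|_x$, and passing to the limit yields a fixed-point equation for the candidate derivative $\Delta(x)$, a section of the bundle of linear maps between the transversals' tangent spaces; unwinding it, $\Delta(x)$ is an infinite product alternating the center-direction pieces $Df^{\pm n}|_{E^c}$ along the strong stable leaf with small corrections coming from the local holonomies at deep scales. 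The step I expect to be the main obstacle is proving this product converges uniformly, and continuously in $x$: this is exactly where the center bunching condition $\|Df|_{E^s}\|<m(Df|_{E^c})/\|Df|_{E^c}\|$ is indispensable, since it forces the distortion of $Df^n$ along $E^c$, weighed against the strong stable contraction rate, to decay geometrically, making the corrections summable; the $C^2$ hypothesis enters here, through the good modulus of continuity of $Df$, to guarantee the correction terms are small enough. The companion inequality $\|Df|_{E^u}\|>\|Df|_{E^c}\|/m(Df|_{E^c})$ plays the same role for the inverse holonomy and for the strong unstable case, and shows that $\Delta(x)$ is invertible with uniformly bounded inverse.

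To upgrade this formal computation to a genuine proof, I would invoke the invariant section theorem of Hirsch--Pugh--Shub: the assignment sending a candidate holonomy-derivative field to the right-hand side of the differentiated conjugacy relation is a fiber transform over a base dynamics that, after the reduction, contracts in the directions that matter, and center bunching makes this transform a fiber contraction, so it admits a unique continuous invariant section. One checks that both $\Delta$ and the honest derivatives of the finite-scale $C^1$ approximations $f^{-n}\circ(\text{smooth approximation of }h_n)\circ f^n$ satisfy the same relation, so these approximations converge to $h$ in the $C^1$ topology; hence $h$ is $C^1$ with continuous, everywhere invertible derivative $\Delta$, that is, a $C^1$ local diffeomorphism, and the strong unstable statement follows by running the argument for $f^{-1}$. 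In short, the heart of the matter is the convergence of the derivative cocycle under center bunching --- a delicate distortion estimate along strong stable leaves --- while the remaining passage from a continuous candidate derivative to genuine $C^1$ regularity is the standard but technical fiber-contraction step.
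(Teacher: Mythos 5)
The paper does not prove this statement at all: it is quoted as \cite[Theorem B]{PSW97} and used as a black box, the only accompanying in-paper content being the remark that a $C^2$ Kan-type skew product satisfies dynamical coherence and center bunching (because of $(K_3)$ and the one-dimensional center). So there is no internal argument to compare yours with; the relevant comparison is with the Pugh--Shub--Wilkinson proof itself.

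Judged on its own, your write-up correctly identifies the known strategy --- iterate the conjugacy relation $h=f^{-n}\circ h_n\circ f^n$ along a center-stable leaf, use center bunching to beat the distortion of $Df^n|_{E^c}$ against the strong stable contraction, and obtain the candidate derivative as the unique invariant section of a fiber contraction --- but as a proof it is a plan, not an argument. Every quantitatively hard step is asserted rather than established: the H\"older regularity of $E^c$ and of the finite-scale holonomies $h_n$ with uniform constants, the summability/uniform convergence of the infinite product defining the candidate derivative $\Delta(x)$ (this is exactly where the bunching inequality must be converted into an explicit estimate involving the H\"older exponent of the bundles, and where $C^2$, or $C^{1+\theta}$, regularity enters), the verification that the graph transform is in fact a fiber contraction in an appropriate bundle of bounded sections, and the passage from pointwise convergence of derivatives of approximations to genuine $C^1$ convergence, without which $\Delta$ need not be $Dh$. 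One smaller inaccuracy: the invertibility of $Dh$ does not follow from the unstable bunching inequality; it follows from running the same argument for the holonomy with the transversals interchanged and observing the two maps are mutually inverse, while the unstable inequality is what you need for the strong unstable holonomy inside center-unstable leaves (equivalently, for $f^{-1}$). If this theorem were to be proved rather than cited, the omitted estimates are the entire content; as a citation, the paper's treatment (verify the hypotheses, invoke Theorem B) is the appropriate one.
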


\begin{rmk}
	For a $C^2$ Kan-type partially hyperbolic diffeomorphism,
	the holonomy map defined by the strong stable/unstable foliation,
	restricted to a center-stable/center-unstable leaf,
	is a $C^1$ diffeomorphism locally.
	In fact, the dynamically coherent condition is satisfied
	by Item $(K_3)$ in Definition \ref{def:Kan-type} with the skew-product form,
	and the	center bunching condition is also satisfied
	by Item $(K_3)$ with the setting of one-dimensional center.
\end{rmk}

\subsection{Analysis in the one-dimensional dynamics}

Before we start to prove the topological transitivity of Kan-type systems,
we need to get some preparation on the one-dimensional center dynamics.

First, we introduce the following classical linearization theorem by Sternberg,
for the convenience of providing $C^1$-charts in the one-dimensional dynamics.

\begin{thm}[\cite{Na11}, Theorem 3.6.2]\label{SLT}
	Let $f$ be a $C^2$ diffeomorphism
	from a neighborhood containing $0$ in $\mathbb{R}^+$ onto its image.
	If $f'(0)=\alpha\neq1$, then there is a $C^1$ local diffeomorphism
	$h$ onto its image with $h'(0)=1$
	and $ h\circ f=\alpha\cdot h$ near $0$.
\end{thm}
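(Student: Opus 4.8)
The plan is to run the classical telescoping / infinite-product construction of the $C^1$-linearizing coordinate. First I would reduce to the contracting case. Since $f$ is a diffeomorphism of a one-sided neighborhood of the boundary point $0\in\mathbb{R}^{+}$ onto its image, it must fix $0$; if $\alpha>1$ I would apply the statement to $f^{-1}$ (still $C^2$, fixing $0$, with derivative $\alpha^{-1}\in(0,1)$ at $0$) and note that any $C^1$ local diffeomorphism $h$ with $h\circ f^{-1}=\alpha^{-1}h$ near $0$ automatically satisfies $h\circ f=\alpha h$ near $0$. So from now on I assume $0<\alpha<1$.

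Next I would set up the candidate coordinate. Writing $f(x)=\alpha x+r(x)$ with $f\in C^{2}$ and $r(0)=r'(0)=0$, Taylor's theorem provides constants $C,\delta_{0}>0$ with $|r(x)|\le Cx^{2}$ and $|r'(x)|\le Cx$ on $[0,\delta_{0}]$; equivalently $f(x)=\alpha x(1+\psi(x))$ and $f'(x)=\alpha(1+\rho(x))$ with $\psi,\rho$ continuous, vanishing at $0$, and $|\psi(x)|,|\rho(x)|\le C'|x|$ there. Fixing $\beta\in(\alpha,1)$ and shrinking to $I=[0,\delta]$ on which $0<f'<\beta$ makes $I$ forward invariant with $f^{n}(x)\le\beta^{n}x$ for all $n$ and all $x\in I$. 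I then define $h_{n}(x)=\alpha^{-n}f^{n}(x)$ on $I$; the chain rule gives the product formulas $h_{n}(x)=x\prod_{k=0}^{n-1}(1+\psi(f^{k}(x)))$ and $h_{n}'(x)=\prod_{k=0}^{n-1}(1+\rho(f^{k}(x)))$. Because $\sum_{k\ge0}|\psi(f^{k}(x))|\le C'x\sum_{k\ge0}\beta^{k}$ and likewise for $\rho$, both series converge uniformly on $I$, so the partial products converge uniformly to continuous limits: $h_{n}\to h$ and $h_{n}'\to g$ uniformly on $I$. The standard theorem on uniform convergence of derivatives then yields $h\in C^{1}(I)$ with $h'=g$; evaluating the products at $0$ gives $h(0)=0$ and $h'(0)=g(0)=1$, so $h$ is a $C^{1}$ local diffeomorphism near $0$. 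Finally, passing to the limit in the identity $h_{n}\circ f=\alpha h_{n+1}$ (which holds for all $n$ straight from the definition of $h_{n}$) gives $h\circ f=\alpha h$ on a neighborhood of $0$ small enough that $f$ maps it into $I$.

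The one point requiring care — and the only genuine obstacle — is the uniform exponential control $f^{n}(x)\le\beta^{n}x$ on a fixed interval $I$, together with the resulting uniform summability of $\sum_{k}\psi(f^{k}(x))$ and $\sum_{k}\rho(f^{k}(x))$: this is what forces $I$ and $\beta$ to be chosen so that $f$ genuinely contracts by the factor $\beta$ near $0$, and it is the step where the $C^{2}$ hypothesis is used (through $|r(x)|\le Cx^{2}$ and $|r'(x)|\le Cx$). It is also the reason why only $C^{1}$, and not higher, regularity of $h$ can be asserted. Once these estimates are in place, all of the convergence statements and the verification of the functional equation are routine.
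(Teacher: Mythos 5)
Your proof is correct. Note that the paper does not prove this statement at all: it is quoted verbatim as Theorem 3.6.2 of Navas's book (Sternberg's $C^1$ linearization for $C^2$ one-dimensional contractions/expansions), so there is no in-paper argument to compare with; your argument is the standard telescoping construction $h=\lim_n \alpha^{-n}f^n$, and every step goes through: the reduction to $0<\alpha<1$ via $f^{-1}$, the Taylor bounds $|r(x)|\le Cx^2$, $|r'(x)|\le Cx$ from the $C^2$ hypothesis, the forward-invariant interval $I=[0,\delta]$ with $f^n(x)\le\beta^n x$, the product formulas for $h_n$ and $h_n'$, uniform convergence, and the passage to the limit in $h_n\circ f=\alpha h_{n+1}$. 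Two small points worth tightening. First, $f(0)=0$ is really an implicit hypothesis of the theorem rather than a consequence: a diffeomorphism of a one-sided neighborhood of $0$ onto its image need not literally fix $0$ unless the image is again a neighborhood of $0$ in $\mathbb{R}^+$ (in the paper's applications, to the germs of $\phi_p$ and $\phi_q^{-1}$ at the fixed boundary point, it does). Second, when you deduce uniform convergence of the partial products from uniform summability of $\sum_k|\psi(f^k(x))|$ and $\sum_k|\rho(f^k(x))|$, you should also shrink $\delta$ so that the factors $1+\psi(f^k(x))$ and $1+\rho(f^k(x))$ lie in, say, $[1/2,3/2]$ on $I$; this is automatic from $|\psi|,|\rho|\le C'\delta$, and it both justifies the product convergence and gives $h'>0$ on all of $I$, so $h$ is a $C^1$ diffeomorphism onto its image as required.
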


Now we present the intersection result in a quantitative way
with the map $h$ working as the holonomy map later.
Note that the inverse of NS-map is SN-map and vice versa.

\begin{prop}\label{prop-Duminy-id}
	Let $h:[0,1]\to[0,1]$ be an orientation-preserving $C^1$ diffeomorphism.
	Assume that $f$ and $g$ are $C^2$ NS-maps
	satisfying $\ln \alpha$ and $\ln \beta$ are rationally independent,
	where $\alpha=f'(0)$ and $\beta=g'(0)$.
	
	Then, for any $x\in(0,1)$, the set
	\begin{displaymath}
		\left\lbrace f^{-k}\circ h^{-1}\circ g^l(x):k,l\in\mathbb{N}^*\right\rbrace
	\end{displaymath}
	is dense in $[0,1]$.
	
	In particular, for any intervals $I,J \subseteq [0,1]$,
	there exist infinitely many pairs of integers $k_n,l_n>0$ such that
	\begin{displaymath}
		\big( h\circ f^{k_n}(I) \big) \bigcap g^{l_n}(J) \neq \varnothing
	\end{displaymath}
	and 		
	\begin{displaymath}
		\dfrac{k_n}{l_n} \to \dfrac{\ln\beta}{\ln\alpha}
		\quad \text{as} \quad n\to \infty.
	\end{displaymath}
	
	Moreover, denote by $I=[a,b]$ and $J=[c,d]$, there exists $\rho>0$ such that
	\begin{displaymath}
		\dfrac{| f^{k_n}(I) \bigcap
			\big( h^{-1}\circ g^{l_n}(J) \big) |}{f^{k_n}(b)-f^{k_n+1}(b)}
		\geqslant\rho
		\quad\text{and}\quad	
		\dfrac{| \big( h\circ f^{k_n}(I) \big) \bigcap g^{l_n}(J)|}
		{g^{l_n}(d)-g^{l_n+1}(d)}
		\geqslant\rho.
	\end{displaymath}
\end{prop}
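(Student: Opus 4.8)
The plan is to work in Sternberg linearizing coordinates at the common attracting fixed point $0$ and reduce everything to a density statement for a set of the shape $\{-k\ln\alpha+l\ln\beta:k,l\ge1\}$ in $\mathbb{R}$. Since $\alpha=f'(0)$ and $\beta=g'(0)$ lie in $(0,1)$, Theorem~\ref{SLT} gives $C^1$ local diffeomorphisms $\psi,\varphi$ on some $[0,\delta)$ with $\psi(0)=\varphi(0)=0$, $\psi'(0)=\varphi'(0)=1$, $\psi\circ f=\alpha\psi$ and $\varphi\circ g=\beta\varphi$, hence $\psi\circ f^{-k}=\alpha^{-k}\psi$ on $f^{k}([0,\delta))$. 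As $f,g$ are NS-maps, forward orbits in $(0,1)$ tend to $0$, so for a fixed $x\in(0,1)$ one has $\varphi(g^{l}(x))=a\beta^{l}$ for all large $l$, with $a=a(x)>0$ constant. Setting $w_{l}:=h^{-1}(g^{l}(x))$ and $G:=\psi\circ h^{-1}\circ\varphi^{-1}$, which is $C^1$ near $0$ with $G(0)=0$ and $G'(0)=1/h'(0)=:\mu>0$, I obtain $\psi(w_{l})=\mu a\,\beta^{l}(1+\epsilon_{l})$ with $\epsilon_{l}\to0$; consequently, whenever $w_{l}$ lies in the domain where $\psi\circ f^{-k}=\alpha^{-k}\psi$, one has $\psi(f^{-k}(w_{l}))=\alpha^{-k}\mu a\,\beta^{l}(1+\epsilon_{l})$.

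To prove density, fix a target $z^{*}\in(0,1)$; iterating $f$ forward write $z^{*}=f^{-j}(z^{**})$ with $z^{**}\in(0,\delta)$, so it is enough to approximate $z^{**}$ by points $f^{-k}(w_{l})$ and then apply the homeomorphism $f^{-j}$, absorbing $j$ into $k$. Taking logarithms, making $\alpha^{-k}\mu a\,\beta^{l}$ close to $\psi(z^{**})$ means, for $l$ large, approximating the fixed number $T:=\ln\psi(z^{**})-\ln(\mu a)$ by $-k\ln\alpha+l\ln\beta$ with $k,l\ge1$. Because $-\ln\alpha>0$, $-\ln\beta>0$ and $(-\ln\beta)/(-\ln\alpha)\notin\mathbb{Q}$, the fractional parts of $l(-\ln\beta)/(-\ln\alpha)$ $(l\ge1)$ are dense in $[0,1)$; choosing such an $l$ — arbitrarily large — with $l(-\ln\beta)$ close modulo $-\ln\alpha$ to the residue dictated by $T$, and then the essentially unique $k\approx(l(-\ln\beta)+T)/(-\ln\alpha)\ge1$, makes $-k\ln\alpha+l\ln\beta$ as close to $T$ as desired. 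For such $(k,l)$, $\alpha^{-k}\psi(w_{l})$ is close to $\psi(z^{**})<\psi(\delta)$, so $w_{l}$ automatically lies in the correct domain and $f^{-k}(w_{l})$ is close to $z^{**}$; this gives density in $[0,1]$. For the ``in particular'' clause I run this with $x$ an interior point of $J$, getting $k,l\ge1$ with $f^{-k}(h^{-1}(g^{l}(x)))\in\mathrm{int}\,I$, and applying $h\circ f^{k}$ yields $g^{l}(x)\in(h\circ f^{k}(I))\cap g^{l}(J)$; forcing $l$ arbitrarily large produces infinitely many pairs $(k_{n},l_{n})$, and since the construction keeps $-k_{n}\ln\alpha+l_{n}\ln\beta$ bounded, dividing by $l_{n}\ln\alpha$ gives $k_{n}/l_{n}\to\ln\beta/\ln\alpha$, while boundedness of $k_{n}\ln\alpha-l_{n}\ln\beta$ gives $\alpha^{k_{n}}\asymp\beta^{l_{n}}$ up to a fixed multiplicative constant.

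For the quantitative estimate I use that $\psi,\varphi,h$ have continuous, nonzero derivatives at $0$, hence bounded distortion near $0$. From $\psi(f^{k_{n}}(I))=\alpha^{k_{n}}\psi(I)$ and $\psi(f^{k_{n}}(b))=\alpha^{k_{n}}\psi(b)$, $\psi(f^{k_{n}+1}(b))=\alpha^{k_{n}+1}\psi(b)$ one reads off $|f^{k_{n}}(I)|\asymp\alpha^{k_{n}}$ and $f^{k_{n}}(b)-f^{k_{n}+1}(b)\asymp\alpha^{k_{n}}$ with constants depending only on $I,\alpha$; similarly $|h^{-1}(g^{l_{n}}(J))|\asymp\beta^{l_{n}}$ and $g^{l_{n}}(d)-g^{l_{n}+1}(d)\asymp\beta^{l_{n}}$. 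With $\alpha^{k_{n}}\asymp\beta^{l_{n}}$ the intervals $A_{n}:=f^{k_{n}}(I)$ and $B_{n}:=h^{-1}(g^{l_{n}}(J))$ have comparable lengths. Moreover I can arrange that the common point $p_{n}:=h^{-1}(g^{l_{n}}(x))$ lies at relative distance bounded away from $0$ and $1$ inside both $A_{n}$ and $B_{n}$: inside $B_{n}$ because $x\in\mathrm{int}\,J$ and $g^{l_{n}},h^{-1}$ distort boundedly near $0$, so the relative position of $p_{n}$ converges to an interior value; inside $A_{n}$ because in the density step I may aim $f^{-k_{n}}(p_{n})$ at the midpoint of $I$, and $f^{k_{n}}|_{I}$ is a fixed diffeomorphism followed by a map affine in the $\psi$-coordinate, hence preserves relative position up to a fixed factor. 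An elementary interval lemma (two intervals of comparable length sharing an interior point overlap in a definite fraction of each) then gives $|A_{n}\cap B_{n}|\ge c\,|A_{n}|$, and applying $h$, $|h(A_{n})\cap g^{l_{n}}(J)|=|h(A_{n}\cap B_{n})|\asymp|A_{n}\cap B_{n}|$; combined with $|A_{n}|\asymp f^{k_{n}}(b)-f^{k_{n}+1}(b)\asymp g^{l_{n}}(d)-g^{l_{n}+1}(d)$ this yields both inequalities with a common $\rho>0$.

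The hard part will be the uniformity in $n$ in this last step: the distortion constants and the ``relative position bounded away from the endpoints'' bounds must not depend on $n$, which forces some care in how the pairs $(k_{n},l_{n})$ are selected in the density argument and in reducing, via the auxiliary iterate $f^{-j}$, to target intervals contained in a fixed linearization neighborhood of $0$. By contrast, the arithmetic ingredient — density of $\{-k\ln\alpha+l\ln\beta:k,l\ge1\}$ in $\mathbb{R}$ together with the possibility of taking $l$ (hence $k$) arbitrarily large — is just elementary equidistribution and is not where the difficulty lies.
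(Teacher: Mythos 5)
Your proposal is essentially correct, and it splits naturally into two halves. The first half (density of $\{f^{-k}\circ h^{-1}\circ g^{l}(x)\}$ and the intersection times with $k_n/l_n\to\ln\beta/\ln\alpha$) follows the same route as the paper: Sternberg $C^1$ linearization of $f$ and $g$ at the common sink $0$, expansion of the transfer map in coordinates to first order (your $G(s)=\mu s(1+o(1))$ is the paper's $\tilde h^{-1}(t)=\theta^{-1}t+R(t)$ with $R(t)=o(t)$), and density of $\{-k\ln\alpha+l\ln\beta\}$ from rational independence; your small extra reduction $z^{*}=f^{-j}(z^{**})$ to land the target in the linearization neighborhood is the right way to pass from the paper's statement on $[0,\delta]$ to all of $[0,1]$. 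The second, quantitative half is where you genuinely diverge. The paper runs the intersection argument on the middle thirds $I_0,J_0$ and then splits into two cases: if $h\circ f^{k_n}(I)$ is not contained in $g^{l_n}(J)$, the overlap contains the $g^{l_n}$-image of a fixed end block of $J$ and is bounded below by a Denjoy-type $C^2$ distortion estimate for $g$ (via $\sum_j|g^j(\xi_1)-g^j(\xi_2)|$); if it is contained, the overlap is all of $h\circ f^{k_n}(I)$, whose length is compared to the fundamental domain in linearizing coordinates using $\alpha^{k_n}\beta^{-l_n}\to\eta^{-1}$. You instead arrange the meeting point $h^{-1}(g^{l_n}(x))$ to sit at definite relative position inside both $f^{k_n}(I)$ and $h^{-1}(g^{l_n}(J))$ and observe that all four relevant lengths are $\asymp\alpha^{k_n}\asymp\beta^{l_n}$, which yields both bounds in one stroke, with no case split and no explicit use of $g''$ beyond what Sternberg already requires. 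The uniformity issues you flag are real but benign: after pushing $I,J$ into the linearization neighborhood by a fixed number of iterates, $f^{k}$ and $g^{l}$ are exactly linear in coordinates, so relative positions are preserved, and the coordinate changes $\psi^{\pm1},\varphi^{\pm1},h^{\pm1}$ have derivatives bounded above and below on compact sets, giving $n$-independent distortion constants; aiming $f^{-k_n}(p_n)$ within a fixed fraction of $|I|$ of the midpoint is available from the density step. One caveat shared with the paper's statement: if $b=1$ or $d=1$ the fundamental domains $f^{k_n}(b)-f^{k_n+1}(b)$, $g^{l_n}(d)-g^{l_n+1}(d)$ vanish, so both arguments implicitly take $I,J$ nondegenerate with right endpoints in $(0,1)$, which is all the application uses.
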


\begin{proof}[Proof of Proposition \ref{prop-Duminy-id}]
	By Sternberg linearization theorem,
	there exist two $C^1$ local diffeomorphisms $h_1,h_2$ such that
	\begin{displaymath}
		h_1'(0)=h_2'(0)=1,
	\end{displaymath}
	with the following conjugate equations holding:
	\begin{displaymath}
		\begin{split}
			h_1\circ f(t)&=\alpha \cdot h_1(t),\\
			h_2\circ g(t)&=\beta  \cdot h_2(t),
		\end{split}
	\end{displaymath} 	
	for any $t$ near the hyperbolic sink $0$.
	
	Pick a small $\delta >0$ such that the conjugate equations both hold on $[0,\delta]$.
	Then, there is a commutative diagram as follows.
	\begin{displaymath}
		\begin{gathered} \xymatrix{
				f:[0,\delta]\circlearrowleft \ar[r]^h \ar[d]_{h_1} &g:[0,\delta]\circlearrowleft  \ar[d]_{h_2}\\
				\alpha:[0,h_1(\delta)]\circlearrowleft \ar[r]^{\tilde{h}}  &\beta:[0,h_2(\delta)]\circlearrowleft \\}	
		\end{gathered}
	\end{displaymath}
	
	We are going to show there exist infinitely many intersection iterates at first.
	\begin{lem}\label{lem-InterInf}
		For any $x\in (0,\delta]$ and $I=[a,b]\subseteq [0,\delta]$,
		there exist infinitely many integers $k_n,l_n>0$ such that
		\begin{displaymath}
			g^{l_n}(x) \in h\circ f^{k_n}([a,b]),
		\end{displaymath}
		where these iterate numbers $k_n,l_n$ satisfy
		\begin{displaymath}
			\dfrac{k_n}{l_n} \to \dfrac{\ln\beta}{\ln\alpha}
			\quad\quad \text{as} \quad n\to \infty.
		\end{displaymath}
	\end{lem}
	\begin{proof}[Proof of Lemma \ref{lem-InterInf}]
		From the conjugate viewpoint,
		we are going to prove (see Figure \ref{IntMech})
		\begin{equation}\label{prop-aim-inter}
			\beta^l(\tilde{x}) \in \tilde{h}\circ \alpha^k([\tilde{a},\tilde{b}]),
		\end{equation}
		where $\tilde{x}=h_2(x)$, $\tilde{I}=h_1(I)=[\tilde{a},\tilde{b}]$
		and $\tilde{h}=h_2 \circ h \circ h^{-1}_1$.

		\begin{figure}[htbp]
			\centering
			\includegraphics[width=8.6cm]{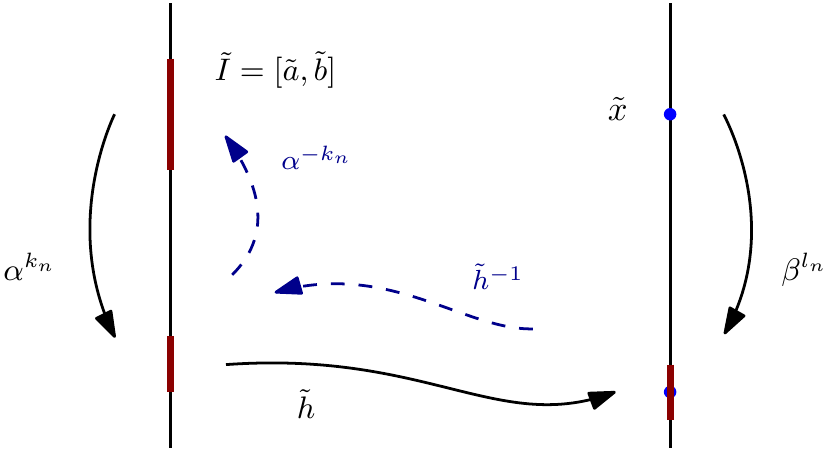}
			\caption{Intersections dynamics}
			\label{IntMech}
		\end{figure}

		Note that $\tilde{h}'(0)=h'(0)=\theta>0,$
		we can write
		\begin{displaymath}
			\tilde{h}^{-1}(t)=\theta^{-1} \cdot t + R(t),
		\end{displaymath}
		with the remainder term
		\begin{displaymath}
			|R(t)| \leqslant M(t)\cdot t
			\quad \text{for} \quad
			t\in [0,h_2(\delta)],
		\end{displaymath}
		where $M(t)\to 0$ as $t\to 0$.
		
		Then, for any $\tilde{x}\in(0,h_2(\delta)]$ and interval $\tilde{I}\subseteq[0,h_1(\delta)]$,
		there are two positive numbers
		$\eta=\eta(I)$ and $\varepsilon=\varepsilon(I)<\eta$ such that
		\begin{equation}\label{ToGene}
			[\eta^* \tilde{x}-\varepsilon,\eta^* \tilde{x}+\varepsilon]
			\subseteq \theta\cdot\tilde{I},
		\end{equation}
		for any positive number $\eta^*\in [\eta-\varepsilon,\eta+\varepsilon]$.
		
		Since $\ln \alpha$ and $\ln \beta$ are rationally independent,
		for the given number $\ln\eta$,
		we have infinitely many pairs integers $k_n,l_n\to\infty$ such that
		\begin{align}\label{ToGene=1}
			-k_n\ln\alpha+l_n\ln\beta \to \ln\eta,
			\quad \text{i.e.,} \quad
			\alpha^{-k_n}\cdot\beta^{l_n} \to \eta.
		\end{align}
		Thus, we can take $N>0$ such that, for all $n>N$,
		the corresponding infinitely many pairs integers $k_n,l_n$
		satisfy the inequality:
		\begin{align}\label{ToGene1}
			\alpha^{-k_n}\cdot\beta^{l_n}\triangleq\eta^*_n
			\in [\eta-\varepsilon,\eta+\varepsilon].
		\end{align}
		Meanwhile, note that $\beta<1$, for the integers $k_n,l_n\to\infty$, we have
		\begin{displaymath}
			|\alpha^{-k_n}\cdot R(\beta^{l_n}(\tilde{x}))|
			\leqslant \alpha^{-k_n}\cdot M(\beta^{l_n}\tilde{x})
			\cdot \beta^{l_n}\tilde{x} \to 0.
		\end{displaymath}
		We can enlarge this $N$ such that,
		for all $n>N$, these corresponding infinitely many pairs integers $k_n,l_n$
		also satisfy the inequality:
		\begin{align}\label{ToGene2}
			|\alpha^{-k_n}\cdot R(\beta^{l_n}(\tilde{x}))|
			< \theta^{-1}\cdot\varepsilon.
		\end{align}
		
		Hence, combining Inequalities
		\eqref{ToGene}, \eqref{ToGene1} and \eqref{ToGene2},
		we get
		\begin{displaymath}
			\begin{split}
				\alpha^{-k_n}\circ \tilde{h}^{-1}\circ \beta^{l_n} (\tilde{x})
				&= \alpha^{-k_n}(\theta^{-1}\cdot \beta^{l_n}(\tilde{x}) +
				R( \beta^{l_n}(\tilde{x}) ) ),\\
				&= \theta^{-1}\cdot\alpha^{-k_n}\cdot \beta^{l_n}(\tilde{x}) + \alpha^{-k_n}\cdot
				R( \beta^{l_n}(\tilde{x})),\\
				&\in\theta^{-1}\cdot[\eta^*_n \tilde{x}-\varepsilon,\eta^*_n \tilde{x}+\varepsilon]
				\subseteq \tilde{I}.
			\end{split}
		\end{displaymath}
		
		This implies that there exist infinitely many pairs integers $k_n,l_n>0$ such that
		the desired Inclusion \eqref{prop-aim-inter} holds,
		with these $k_n,l_n$ satisfying that
		\begin{displaymath}
			\dfrac{k_n}{l_n} \to \dfrac{\ln\beta}{\ln\alpha}
			\quad\quad \text{as} \quad n\to \infty.
		\end{displaymath}
	\end{proof}

	According to Lemma \ref{lem-InterInf},
	for any given intervals $I,J \subseteq [0,1]$,
	we naturally have
	\begin{displaymath}
		\big(h\circ f^{k_n}(I)\big) \bigcap g^{l_n}(J) \neq \varnothing.
	\end{displaymath}
	
	Moreover, we can make slight modifications for the choices of $k_n$ and $l_n$
	to get a uniform lower bound for the proportions
	between the length of these intersections
	and the length of the corresponding iterates of the fundamental domain.
	
	Precisely, for the given intervals $I,J \subseteq [0,1]$,
	we take their middle thirds and denote them by $I_0,J_0$, respectively.
	By the argument of the proof of Lemma \ref{lem-InterInf},
	for the corresponding number $\eta_0=\eta_0(I_0)>0$,
	there are also infinitely many pairs integers $k_n^0,l_n^0 >0$ with
	\begin{displaymath}
		\big(h\circ f^{k_n^0}(I_0)\big) \bigcap g^{l_n^0}(J_0) \neq \varnothing,
	\end{displaymath}	
	and
	\begin{displaymath}
		\alpha^{-k_n^0}\cdot\beta^{l_n^0} \to \eta_0
		\quad \text{as} \quad n\to \infty.
	\end{displaymath}
	By taking these iterates $k_n^0,l_n^0$
	on the previous intervals $I=[a,b],J=[c,d]$,
	we have the following refined result based on distortion control estimations.

	\begin{lem}\label{lem-InterSpe}
		There is a constant $\rho>0$ such that
		\begin{displaymath}
			\dfrac{|f^{k^0_n}(I) \bigcap
				\big(h^{-1}\circ g^{l^0_n}(J)\big) |}{f^{k^0_n}(b)-f^{k^0_n+1}(b)}
			\geqslant\rho
			\quad\text{and}\quad	
			\dfrac{| \big( h\circ f^{k^0_n}(I) \big)
				\bigcap g^{l^0_n}(J)|}{g^{l^0_n}(d)-g^{l^0_n+1}(d)}
			\geqslant\rho.
		\end{displaymath}
	\end{lem}
	\begin{proof}[Proof of Lemma \ref{lem-InterSpe}]
		It suffices to prove the existence of a constant $\rho$ with
		\begin{displaymath}
			\dfrac{| \big( h\circ f^{k^0_n}(I) \big)
				\bigcap g^{l^0_n}(J)|}{g^{l^0_n}(d)-g^{l^0_n+1}(d)}
			\geqslant\rho,
		\end{displaymath}
		since $h$ is an orientation-preserving diffeomorphism.
		For the intersection iterates of the middle intervals $I_0=[a_0,b_0]$ and $J_0=[c_0,d_0]$,
		there are only two possibilities
		for the intersections of the previous intervals $I$ and $J$ now.
		
		\vspace{4pt}\noindent	
		\textbf{Case 1. $h\circ f^{k_n^0}(I)$ is not totally contained in $g^{l_n^0}(J)$.}
		Without loss of generality, we assume $g(d)<c$. Then, we have
		\begin{displaymath}
			\left( g^{l_n^0}(c),g^{l_n^0}(c_0) \right) \subseteq h\circ f^{k_n^0}(I)
			\quad\text{or}\quad
			\left( g^{l_n^0}(d_0),g^{l_n^0}(d) \right) \subseteq h\circ f^{k_n^0}(I),
		\end{displaymath}
		since we have $k_n^0,l_n^0$
		with the intersection for middle thirds of the intervals $I,J$.
		
		Thus, by the distortion control argument,
		we can get the following estimations:
		\begin{displaymath}
			\begin{split}
				\dfrac{|\big(h\circ f^{k_n^0}(I) \big)
					\bigcap g^{l_n^0}(J)|}{g^{l_n^0}(d)-g^{l_n^0+1}(d)}
				&\geqslant \dfrac{g^{l_n^0}(c_0)-g^{l_n^0}(c)}
				{g^{l_n^0}(d)-g^{l_n^0+1}(d)} \\
				&\geqslant \dfrac{c_0-c}{d-g(d)}
				e^{\ln{(g^{l_n^0})'(\xi_1)}-\ln{(g^{l_n^0})'(\xi_2)}}\\
				&\geqslant
				\dfrac{c_0-c}{d-g(d)} e^{-G_{12} \cdot
					\sum\limits_{j=0}^{l_n^0-1}{|g^j(\xi_1)-g^j(\xi_2)|}}\\
				&\geqslant
				\dfrac{c_0-c}{d-g(d)} e^{-G_{12}\cdot d}\triangleq \rho_c,
			\end{split}
		\end{displaymath}
		or
		\begin{displaymath}
			\begin{split}
				\dfrac{|\big(h\circ f^{k_n^0}(I) \big)
					\bigcap g^{l_n^0}(J)|}{g^{l_n^0}(d)-g^{l_n^0+1}(d)}
				&\geqslant \dfrac{g^{l_n^0}(d)-g^{l_n^0}(d_0)}
				{g^{l_n^0}(d)-g^{l_n^0+1}(d)} \\
				&\geqslant \dfrac{d-d_0}{d-g(d)}
				e^{\ln{(g^{l_n^0})'(\xi_3)}-\ln{(g^{l_n^0})'(\xi_4)}}\\
				&\geqslant
				\dfrac{d-d_0}{d-g(d)} e^{-G_{12} \cdot
					\sum\limits_{j=0}^{l_n^0-1}{|g^j(\xi_3)-g^j(\xi_4)|}}\\
				&\geqslant
				\dfrac{d-d_0}{d-g(d)} e^{-G_{12}\cdot d}\triangleq \rho_d,
			\end{split}
		\end{displaymath}
		where
		\begin{displaymath}
			G_{12}=\dfrac{\max\limits_{t\in[0,\tilde{\delta}]}
				{|g''(t)|}}{\min\limits_{t\in[0,\tilde{\delta}]}{|g'(t)|}}
			\quad \text{and} \quad 
			\xi_i \in \big(g(d),d\big) \quad \text{for} \quad i=1,2,3,4.
		\end{displaymath}
		
		\vspace{4pt}\noindent	
		\textbf{Case 2. $h\circ f^{k_n^0}(I)$ is exactly totally contained in $g^{l_n^0}(J)$.}
		So we  have
		\begin{displaymath}
			\dfrac{|\big(h\circ f^{k_n^0}(I)\big) \bigcap
				g^{l_n^0}(J)|}{g^{l_n^0}(d)-g^{l_n^0+1}(d)}	=
			\dfrac{|h\circ f^{k_n^0}(I)|}{g^{l_n^0}(d)-g^{l_n^0+1}(d)}.
		\end{displaymath}
		
		Similarly, we deal with the situation
		from the conjugate viewpoint at first.
		Denote by $\tilde{I}=h_1(I)=[\tilde{a},\tilde{b}]$
		and $\tilde{J}=h_2(J)=[\tilde{c},\tilde{d}]$.
		By $\tilde{h}'(0)>0$,
		there exists $\theta'>0$ with
		\begin{displaymath}
			\tilde{h}'(t)\geqslant \theta' \quad \text{for} \quad t\in [0,h_1(\delta)].
		\end{displaymath}
		Note that $\alpha<1$, we can always take $k_n^0$ large enough to guarantee
		\begin{displaymath}
			\alpha^{k_n^0}(\tilde{b})<h_1(\delta),
		\end{displaymath}
		which implies
		\begin{displaymath}
			\tilde{h}(\alpha^{k_n^0}(\tilde{b}))-\tilde{h}(\alpha^{k_n^0}(\tilde{a}))
			\geqslant \theta'\cdot \alpha^{k_n^0}\cdot(\tilde{b}-\tilde{a}).
		\end{displaymath}
		At the same time, by taking $k_n^0,l_n^0$ large enough,
		we can get a number $\eta_0'\in(0,\eta_0^{-1})$ satisfying
		\begin{displaymath}
			\alpha^{k_n^0}\cdot\beta^{-l_n^0}\geqslant \eta_0'.
		\end{displaymath}
		In fact, this also comes from Limit \eqref{ToGene=1}, that is,
		\begin{displaymath}
			\alpha^{k_n^0}\cdot\beta^{-l_n^0} \to
			\eta^{-1} \quad \text{as} \quad k_n^0,l_n^0\to\infty.
		\end{displaymath}
		Thus, we can get the following estimation:
		\begin{displaymath}
			\begin{split}
				\dfrac{|\tilde{h}\circ \alpha^{k_n^0}(\tilde{I})|}{\beta^{l_n^0}(\tilde{d})-\beta^{l_n^0+1}(\tilde{d})}
				&= \dfrac{\tilde{h}\circ \alpha^{k_n^0}(\tilde{b})
					-\tilde{h}\circ \alpha^{k_n^0}(\tilde{a}) }
				{\beta^{l_n^0}(\tilde{d})-\beta^{l_n^0+1}(\tilde{d})}\\
				&\geqslant \dfrac{\theta'\cdot \alpha^{k_n^0}\cdot(\tilde{b}-\tilde{a}) }
				{\beta^{l_n^0}\cdot(\tilde{d}-\beta\tilde{d})}\\	
				&\geqslant \theta'\eta_0'\dfrac{\tilde{b}-\tilde{a}}{\tilde{d}-\beta\tilde{d}}
				\triangleq \tilde{\rho_0}.
			\end{split}
		\end{displaymath}
		
		Because $h_2'(0)=1$ and $0$ is the sink,
		taking $\delta$ small enough, we have $\theta_2'>0$ with
		\begin{displaymath}
			\dfrac{\min\limits_{t\in[0,h_2(\delta)]}{(h_2^{-1})'(t)}}
			{\max\limits_{t\in[0,h_2(\delta)]}{(h_2^{-1})'(t)}} \geqslant \theta_2'.
		\end{displaymath}
		Moreover, we get
		\begin{displaymath}
			\begin{split}
				\dfrac{|\big(h\circ f^{k_n^0}(I)\big)
					\bigcap g^{l_n^0}(J)|}{g^{l_n^0}(d)-g^{l_n^0+1}(d)}
				&=	\dfrac{|h\circ f^{k_n^0}(I)|}{g^{l_n^0}(d)-g^{l_n^0+1}(d)}\\
				&=	\dfrac{|h_2^{-1}\circ\tilde{h}\circ \alpha^{k_n^0}(\tilde{I})|}
				{h_2^{-1}\circ(\beta^{l_n^0}(\tilde{d}))-h_2^{-1}
					\circ(\beta^{l_n^0+1}(\tilde{d}))}\\
				&\geqslant
				\theta_2'\dfrac{|\tilde{h}\circ \alpha^{k_n^0}(\tilde{I})|}
				{\beta^{l_n^0}(\tilde{d})-\beta^{l_n^0+1}(\tilde{d})}\\
				&\geqslant
				\theta_2'\cdot\tilde{\rho_0}\triangleq \rho_0.
			\end{split}
		\end{displaymath}
		
		Finally, combining two cases, we can take
		\begin{displaymath}
			\rho=\min\left\{\rho_0,\rho_c,\rho_d\right\}>0,
		\end{displaymath}
		then we obtain the desired uniform lower bound $\rho$ satisfying
		\begin{displaymath}	
			\dfrac{|\big(h\circ f^{k_n^0}(I)\big) \bigcap g^{l_n^0}(J)|}{g^{l_n^0}(d)-g^{l_n^0+1}(d)}\geqslant\rho.
		\end{displaymath}
	\end{proof}
	
	Combining these two lemmas,
	we just complete the proof of Proposition \ref{prop-Duminy-id}.	
\end{proof}

\section{Kan-type transitivity}

Now we present the proof of Theorem \ref{main-thm} in detail
to show Kan-type transitivity.

\begin{figure}[htbp]
	\centering
	\includegraphics[width=6.6cm]{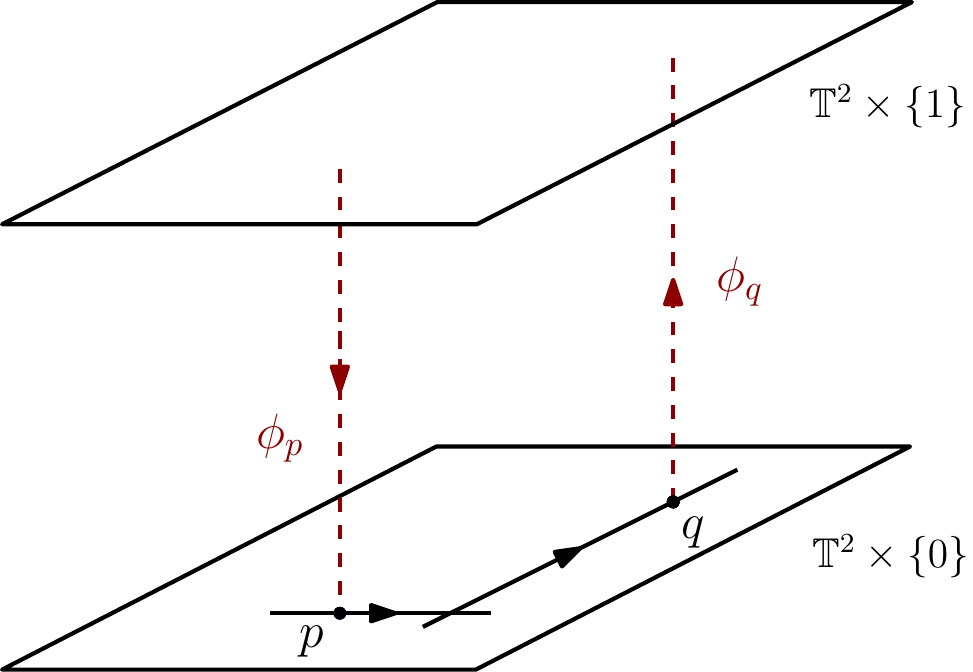}
	\caption{Kan-type dynamics}
	\label{KanDyn}
\end{figure}

\begin{proof}[Proof of Theorem \ref{main-thm}]
	
	For any given non-empty open sets $U,V\subseteq\mathbb{T}^2\times[0,1]$,
	we are going to show the existence of some positive integer $m$ with
	\begin{displaymath}
		F^m(U)\cap V\neq\varnothing.
	\end{displaymath}
	To be clear, we divide the proof into three steps.
	
	\vspace{4pt}\noindent
	\textbf{{Step 1. Holonomy maps with specific sizes.}}

	At first, we define the holonomy map on the torus.
	By the hyperbolicity of $A$, there are transverse $A$-invariant foliations
	$\mathcal{L}^u$ and $\mathcal{L}^s$ on $\mathbb{T}^2$.
	Denote by $h^s_p$ the holonomy map near $p$ along the stable manifolds,
	\begin{displaymath}
		h^s_p:\mathcal{L}^u_{loc}(p)\to\mathcal{L}^u_{loc}(\tilde{p}), ~~ h^s_p(x)=\mathcal{L}_{loc}^s(x)\cap\mathcal{L}^u_{loc}(\tilde{p}),
	\end{displaymath}
	for any $\tilde{p}\in\mathcal{L}_{loc}^s(p)$.
	
	Since the partially hyperbolic diffeomorphism $F$ has interval center fibers,
	for $F$-invariant strong unstable manifolds $\mathcal{W}^u$
	and strong stable manifolds $\mathcal{W}^s$ on $\mathbb{T}^2\times[0,1]$,
	there is a local map $H^s_p$ projecting to $h^s_p$.
	We give this construction carefully.
	
	For simplicity, we take the abusing symbols between $(p,0)$ and $p$
	without making confusions on understanding,
	by regarding $\Gamma_0=\mathbb{T}^2\times\{0\}$ as $\mathbb{T}^2$.
	We denote by $I_p$ the center interval leaf containing the point $p$
	and denote by
	\begin{displaymath}
		\pi:\mathbb{T}^2\times[0,1]\to\Gamma_0
	\end{displaymath}
	the canonical projection along the center direction.
	
	Note that
	\begin{displaymath}
		\overline{\bigcup_{k>0} F^{-k}(\mathcal{W}_{loc}^s(I_p))}
		= \mathbb{T}^2\times[0,1],
	\end{displaymath}
	for the given open set $U\subseteq\mathbb{T}^2\times[0,1]$,
	we take an integer $k_0^{s}>0$ such that
	\begin{displaymath}
		F^{k_0^{s}}(U)\cap\mathcal{W}^s_{loc}(I_p)\neq\varnothing.
	\end{displaymath}
	
	Moreover, taking a point
	\begin{displaymath}
		\tilde{p}\in \left(\pi\circ F^{k^s_0}(U)\right)\cap\mathcal{L}^s(p) \subseteq \Gamma_0,
	\end{displaymath}
	we redefine the holonomy map $h_p^{s}$ on $\Gamma_0$
	with the specific sizes $\varepsilon_p$ and $\tilde{\varepsilon_{p}}$,
	\begin{displaymath}
		\begin{split}
			&h^s_p:\mathcal{L}^u_{\varepsilon_p}(p)\to
			\mathcal{L}^u_{\tilde{\varepsilon_p}}(\tilde{p}),\\
			&h^s_p(x)=\mathcal{L}_{loc}^s(x)\cap
			\mathcal{L}^u_{\tilde{\varepsilon_p}}(\tilde{p}).
		\end{split}
	\end{displaymath}
	Here we have an interval $\tilde{J}_{\tilde{p}}\subseteq I_{\tilde{p}} \cap F^{k^s_0}(U)$.
	
	Then, choosing $\delta_u < \varepsilon_p $,
	we have
	\begin{displaymath}
		\mathcal{W}^u_{\delta_u}(I_p)\subseteq\mathcal{L}^u_{\varepsilon_p}(p) \times [0,1],
	\end{displaymath}
	and we define the holonomy map $H_p^s$ on $\mathbb{T}^2\times[0,1]$
	along the strong stable manifolds by
	\begin{displaymath}
		\begin{split}
			&H^s_p:\mathcal{L}^u_{\varepsilon_p}(p) \times [0,1]
			\to\mathcal{L}^u_{\tilde{\varepsilon_p}}(\tilde{p}) \times [0,1],\\
			&H^s_p(z)=\mathcal{W}_{loc}^s(z)\cap
			\mathcal{L}^u_{\tilde{\varepsilon_p}}(\tilde{p}) \times [0,1].
		\end{split}
	\end{displaymath}
	Here we have an interval $J_p\subseteq I_p$ with $H_p^s(J_p)=\tilde{J}_{\tilde{p}}$.
	
	Thus, denote by
	\begin{displaymath}
		U_0 = F^{k^s_0}(U)
	\end{displaymath}
	and take
	\begin{displaymath}
		\delta_p=\min\limits_{t\in I_p} \max
		\limits_{z\in\mathcal{W}^u_{\delta_u}(t)}\{d(\pi(z),p)\}.	
	\end{displaymath}
	Then, by decreasing $\delta_u$ (hence $\delta_p$), we get
	every center interval $J_{p'}$ in $\Gamma^{cu}_p\triangleq\mathcal{W}^u_{\delta_p}(J_p)$ satisfies
	\begin{displaymath}
		\tilde{J}_{\tilde{p'}}\triangleq H_p^s(J_{p'})\subseteq U_0
		\quad \text{and} \quad
		\Gamma^{cu}_{\tilde{p'}}\triangleq
		\mathcal{W}^u_{\delta_p}(\tilde{J}_{\tilde{p'}}) \subseteq U_0,
	\end{displaymath}
	where $p'\in \mathcal{L}^u_{\delta_p}(p)$ and $\tilde{p'}=\pi\circ H_p^s(p')$.
	
	In the same manner as above, we deal with the other part.
	For the given open set $V\subseteq\mathbb{T}^2\times[0,1]$, we choose $l_0^u>0$ with
	\begin{displaymath}
		F^{-l_0^{u}}(V)\cap\mathcal{W}^u_{loc}(I_q)\neq\varnothing.
	\end{displaymath}
	Denote by
	\begin{displaymath}
		V_0 = F^{-l_0^{u}}(V).
	\end{displaymath}	
	Then, we take
	\begin{displaymath}
		\tilde{q}\in \left(\pi\circ V_0\right) \cap \mathcal{L}^u(q)
	\end{displaymath}
	with corresponding sizes $\varepsilon_q, \tilde{\varepsilon_q}$
	such that the following holonomy map $H^u_q$ is well-defined
	\begin{displaymath}
		\begin{split}
			&H^u_q:\mathcal{L}^s_{\varepsilon_q}(q) \times [0,1]
			\to\mathcal{L}^s_{\tilde{\varepsilon_q}}(\tilde{q}) \times [0,1],\\
			&H^u_q(z)=\mathcal{W}_{loc}^u(z)\cap
			\mathcal{L}^s_{\tilde{\varepsilon_q}}(\tilde{q}) \times [0,1].
		\end{split}
	\end{displaymath}
	Moreover, for the interval $\tilde{J}_{\tilde{q}}\subseteq I_{\tilde{q}} \cap V_0$,
	we have $H_q^u(J_q)=\tilde{J}_{\tilde{q}}$ with some interval $J_q\subseteq I_q$.
	Thus, take $\delta_s<\varepsilon_q$ and denote by
	\begin{displaymath}
		\delta_q=\min\limits_{t\in I_q} \max\limits_{z\in\mathcal{W}^u_{\delta_s}(t)}\{d(\pi(z),q)\}.	
	\end{displaymath}
	Then, by decreasing $\delta_s$ (hence $\delta_q$), we get
	every center interval $J_{q'}$
	in $\Gamma^{cs}_q\triangleq\mathcal{W}^s_{\delta_q}(J_q)$ satisfies
	\begin{displaymath}
		\tilde{J}_{\tilde{q'}}\triangleq H_q^u(J_{q'})\subseteq V_0
		\quad \text{and} \quad
		\Gamma^{cs}_{\tilde{q'}}\triangleq
		\mathcal{W}^s_{\delta_q}(\tilde{J}_{\tilde{q'}}) \subseteq V_0,
	\end{displaymath}
	where $q'\in \mathcal{L}^s_{\delta_q}(q)$ and $\tilde{q'}=\pi\circ H_q^u(q')$.
	
	Here we mention that
	before picking the sizes $\delta_u,\delta_s$,
	we take the positive integers $k_0^s,l_0^u$ large enough such that
	\begin{displaymath}
		\begin{split}
			d_s(p,\tilde{p})&<d_c(b,\phi_p(b)),\\
			d_u(q,\tilde{q})&<d_c(d,\phi_q^{-1}(d)),
		\end{split}
	\end{displaymath}
	where we denote by $J_p=[a,b]$ and $J_q=[c,d]$.
	In fact, this comes from the partial hyperbolicity,
	which means there exist $\lambda,\gamma\in(0,1)$ such that
	\begin{displaymath}
		\|A^{-1}\|^{-1}=\lambda<\gamma< \phi_x'(t)<\gamma^{-1} <\lambda^{-1}=\|A\|,
	\end{displaymath}
	for any $(x,t)\in\mathbb{T}^2\times [0,1]$.
	At the same time, note that
	the center fiber is straight and $\phi_p,\phi_q^{-1}$ are NS-maps,
	we can also take the positive integers $k_0^s,l_0^u$ large enough such that
	there exists $Q>0$ satisfying
	\begin{displaymath}
		\begin{split}
			d_s(x,H_p^s(x)) &\leqslant Q\cdot d_s(\pi(x),\pi(H_p^s(x))),\\
			d_u(y,H_q^u(y)) &\leqslant Q\cdot d_u(\pi(y),\pi(H_q^u(y))),
		\end{split}
	\end{displaymath}
	where $x,y$ are near the boundary $\Gamma_0$ with
	\begin{displaymath}
		d_c(x,\pi(x))<b
		\quad \text{and} \quad
		d_c(y,\pi(y))<d .
	\end{displaymath}
	
	\vspace{4pt}\noindent	
	\textbf{{Step 2. Intersections in center dynamics and the projection.}}

	Now we apply Proposition \ref{prop-Duminy-id} to the following setting.
	Take
	\begin{displaymath}
		r\in\mathcal{L}^u(p)\pitchfork\mathcal{L}^s(q)
	\end{displaymath}
	and denote the holonomy maps from $p$ to $q$ by
	\begin{displaymath}
		\begin{split}
			H^u_p:\mathcal{L}^s_{loc}(p) \times [0,1]
			\to\mathcal{L}^s_{loc}(r) \times [0,1],\\
			H^s_q:\mathcal{L}^u_{loc}(r) \times [0,1]
			\to\mathcal{L}^u_{loc}(q) \times [0,1].
		\end{split}
	\end{displaymath}
	Then, for $C^2$ NS-maps $\phi_p=f$, $\phi_q^{-1}=g$
	and $C^1$ local diffeomorphism
	\begin{displaymath}
		H\triangleq H_q^s\circ H_p^u=h,
	\end{displaymath}
	we have infinitely many pairs of integers $k_n,l_n>0$ such that 	
	\begin{align}\label{Intersection-bd-1}
		(H\circ \phi_p^{k_n}(J_p)) \cap \phi_q^{-l_n}(J_q) \neq \varnothing.
	\end{align}
	We also have $\rho >0 $ satisfying
	\begin{align}\label{Intersection-bd-1-left}
		\dfrac{| \phi_p^{k_n}(J_p) \cap (H^{-1}\circ\phi_q^{-l_n}(J_q)) |}
		{\phi_p^{k_n}(b)-\phi_p^{k_n+1}(b)}\geqslant \rho
	\end{align}
	and
	\begin{align}\label{Intersection-bd-1-right}
		\dfrac{| (H\circ\phi_p^{k_n}(J_p)) \cap \phi_q^{-l_n}(J_q) |}
		{\phi_q^{-l_n}(d)-\phi_q^{-l_n-1}(d)}\geqslant \rho.
	\end{align}
	
	In other words, for the $k_n,l_n$ large enough with
	\begin{displaymath}
		\begin{split}
			\lambda^{-k_n}\cdot\delta_p &>2d_u(p,r), \\
			\lambda^{-l_n}\cdot\delta_q &>2d_s(q,r),
		\end{split}
	\end{displaymath}
	we can apply Proposition \ref{prop-Duminy-id} to get the following intersection
	by  Item \eqref{Intersection-bd-1}:
	\begin{displaymath}
		F^{k_n}(\Gamma_p^{cu}) \cap F^{-l_n}(\Gamma_q^{cs}) \neq \varnothing.
	\end{displaymath}
	Thus, there exists a center interval $J_r$ in $I_r$ such that
	\begin{align}\label{Intersection-Ir}
		J_r= F^{k_n}(J_{p'}) \cap F^{-l_n}(J_{q'}),
	\end{align}
	for center intervals $J_{p'} $ in $\mathcal{W}^u_{\delta_p}(J_p)=\Gamma^{cu}_p$
	and $J_{q'}$ in $\mathcal{W}^s_{\delta_q}(J_q)=\Gamma^{cs}_q$.

	Moreover, since the choices of $\Gamma^{cu}_p$ and $\Gamma^{cs}_q$ satisfy
	\begin{displaymath}
		\mathcal{W}^u_{\delta_p}(H_p^s(J_{p'})) \subseteq U_0
		\quad \text{and} \quad
		\mathcal{W}^s_{\delta_q}(H_q^u(J_{q'})) \subseteq V_0,
	\end{displaymath}
	so under the iterates of $F$, we actually get
	from Equality \eqref{Intersection-Ir}:
	\begin{displaymath}
		\begin{split}
			F^{k_n}(\Gamma_p^{cu}) \cap F^{-l_n}(V_0) \neq \varnothing,\\
			F^{k_n}(U_0) \cap F^{-l_n}(\Gamma_q^{cs}) \neq \varnothing.
		\end{split}
	\end{displaymath}
	Here we also obtain $F^{k_n}(U_0)$ and $F^{-l_n}(V_0)$ intersect
	under the natural projection of $\pi$,
	that is, there exists $\tilde{r}\in\Gamma_0$
	near $r$ (see Figure \ref{HolProj}) satisfying
	\begin{displaymath}
		\tilde{r}
		=   \mathcal{L}^u_{loc}(r_s) \pitchfork \mathcal{L}^s_{loc}(r_u)
		~\in~ \big(\pi\circ F^{k_n}(U_0)\big) \cap \big(\pi\circ F^{-l_n}(V_0)\big),
	\end{displaymath}
	where
	\begin{displaymath}
		\begin{split}
			r_s &\triangleq A^{k_n}(\tilde{p'}) = A^{k_n}(h_p^s(p')),\\
			r_u &\triangleq A^{-l_n}(\tilde{q'})= A^{-l_n}(h_q^u(q')).
		\end{split}
	\end{displaymath}

	\begin{figure}[htbp]
		\centering
		\includegraphics[width=6.6cm]{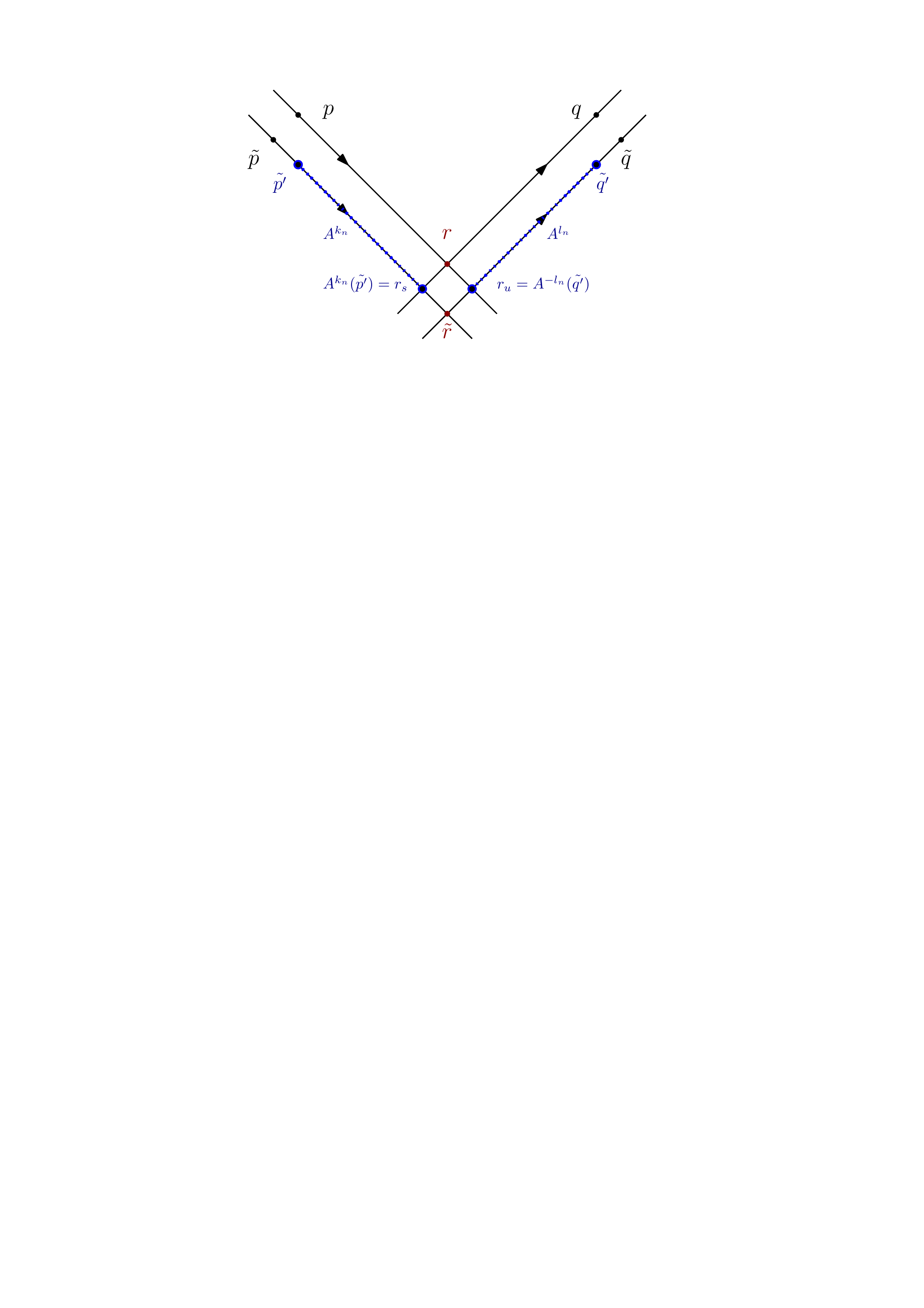}
		\caption{Dynamics under the projection}
		\label{HolProj}
	\end{figure}
	
	\noindent	
	\textbf{{Step 3. Distortion analysis by the partial hyperbolicity.}}

	Finally, we show the contradiction
	if there exists large distortion in the center direction.
	Denote by
	\begin{displaymath}
		J_{p'}=[a',b'] \quad\text{and}\quad J_{q'}=[c',d'].
	\end{displaymath}
	From Equality \eqref{Intersection-Ir}, without loss of generality,
	we suppose in center leaf $I_r$:
	\begin{displaymath}	
		c_r<a_r < d_r<b_r,
	\end{displaymath}
	where
	\begin{displaymath}
		[a_r,b_r]=F^{k_n}([a',b'])
		\quad\text{and}\quad [c_r,d_r]=F^{-l_n}([c',d']).
	\end{displaymath}
	In what follows, we are going to prove
	\begin{displaymath}
		F^{k_n}(U_0) \cap F^{-l_n}(V_0) \neq\varnothing.
	\end{displaymath}
	
	Otherwise, we can assume the order in $I_{\tilde{r}}$:
	\begin{align}\label{Contradic-0}
		{\tilde{a_r}}'=\mathcal{W}^u_{loc}(\tilde{a_r})\cap I_{\tilde{r}}
		> {\tilde{d_r}}'=\mathcal{W}^s_{loc}(\tilde{d_r})\cap I_{\tilde{r}},
	\end{align}
	where
	\begin{displaymath}
		\tilde{a_r} \triangleq H_p^s(a_r)\in I_{r_s}
		\quad\text{and}\quad
		\tilde{d_r} \triangleq H_q^u(d_r)\in I_{r_u}.
	\end{displaymath}
	Then, we have $e_r>d_r$ in $I_r$ (see Figure \ref{HolDist}) such that
	\begin{displaymath}
		\begin{split}
			H_q^u(e_r) =\tilde{e_r}>\tilde{d_r}
			\quad \text{and} \quad
			{\tilde{a_r}}'=\mathcal{W}^s_{loc}(\tilde{e_r})\cap I_{\tilde{r}}.
		\end{split}
	\end{displaymath}
	Note that $e_r>d_r>a_r$, that is, $d_c(a_r,e_r)>d_c(a_r,d_r),$
	so we get
	\begin{align}\label{Contradic+0}
		&d_s(a_r,\tilde{a_r}) + d_u(\tilde{a_r},{\tilde{a_r}}')
		+ d_s({\tilde{a_r}}',\tilde{e_r}) + d_u(\tilde{e_r},e_r) \nonumber\\
		&> d_c(a_r,e_r)  >d_c(a_r,d_r).
	\end{align}	
	\begin{figure}[htbp]
		\centering
		\includegraphics[width=8.6cm]{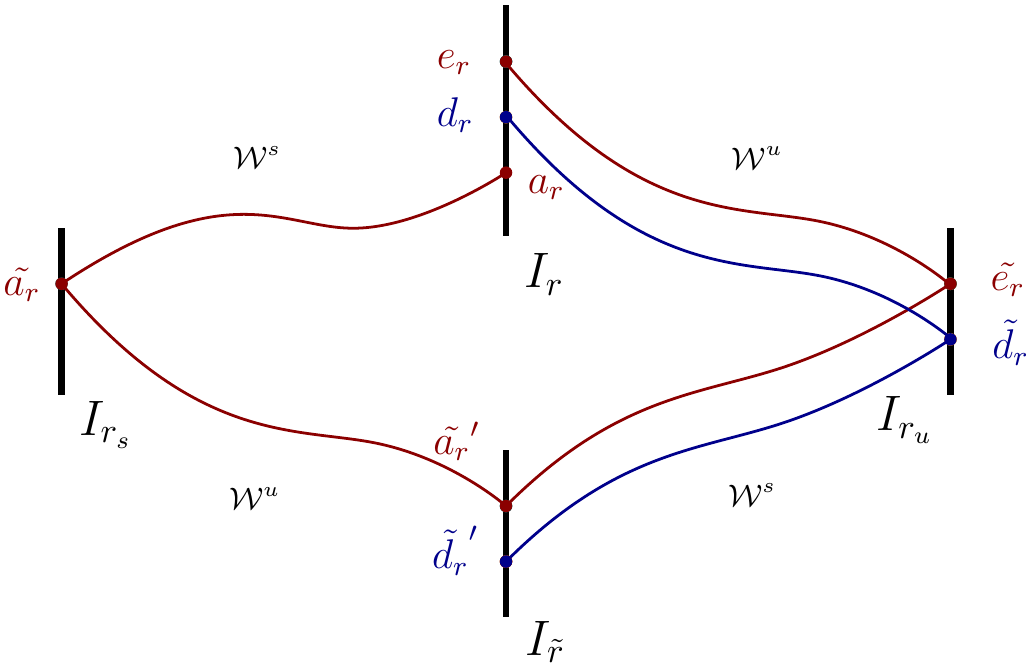}
		\caption{Strong holonomy and center distortion}
		\label{HolDist}
	\end{figure}

	On the one hand, we have
	\begin{displaymath}
		d_c(a_r,d_r) = |(H_p^u\circ \phi_p^{k_n}(J_p))
		\cap (H_q^s\circ\phi_q^{-l_n}(J_q))|.
	\end{displaymath}
	By Inequalities \eqref{Intersection-bd-1-left} and \eqref{Intersection-bd-1-right},
	there exist constants $K_1$ and $K_2$
	which are only dependent on $H_p^u$ and $H_q^s$, respectively, such that
	\begin{align}\label{Intersection-bd+1-left}
		d_c(a_r,d_r)
		&\geqslant	K_1\cdot | \phi_p^{k_n}(J_p)
		\cap (H^{-1}\circ\phi_q^{-l_n}(J_q)) | \nonumber\\
		&\geqslant  K_1\cdot \rho
		\cdot (\phi_p^{k_n}(b)-\phi_p^{k_n+1}(b)) \nonumber\\
		&>			K_1\rho\gamma^{k_n}d_c(b,\phi_p(b)) \triangleq D_1^n
	\end{align}
	and		
	\begin{align}\label{Intersection-bd+1-right}
		d_c(a_r,d_r)
		&\geqslant	K_2\cdot | (H\circ\phi_p^{k_n}(J_p))
		\cap \phi_q^{-l_n}(J_q) | \nonumber\\
		&\geqslant	K_2\cdot \rho \cdot	
		(\phi_q^{-l_n}(d)-\phi_q^{-l_n-1}(d))	\nonumber\\
		&> 			K_2\rho\gamma^{l_n}d_c(d,\phi_q^{-1}(d)) \triangleq D_2^n.
	\end{align}
	Here, recall that the constant $\gamma\in(0,1)$ satisfies
	$\phi_x'(t)\in (\gamma,\gamma^{-1})$.
	
	On the other hand, we can get
	\begin{displaymath}
		\begin{split}
			&d_s(a_r,\tilde{a_r}) + d_u(\tilde{a_r},{\tilde{a_r}}')
			+ d_s({\tilde{a_r}}',\tilde{e_r}) +  d_u(\tilde{e_r},e_r) \\
			&\leqslant	Q\cdot  \{ d_s(r,r_s) + d_u(r_s,\tilde{r})
			+ d_s(\tilde{r},r_u)              +  d_u(r_u, r) \}       \\
			&\leqslant			2Q\cdot \{ d_s(r,r_s) + d_u(r_u, r) \}\\
			&\leqslant	2Q\cdot \{ d_s(A^{k_n}(p'),A^{k_n}(\tilde{p'}))
			+ d_u(A^{-l_n}(q'),A^{-l_n}(\tilde{q'})) \} \\
			&\leqslant  2Q\cdot \{ \lambda^{k_n} d_s(p',\tilde{p'}) 	
			+ \lambda^{l_n} d_u(q',\tilde{q'}) 	     \} \\
			&\leqslant			2Q\cdot \{ \lambda^{k_n} d_s(p,\tilde{p})		
			+ \lambda^{l_n}d_u(q,\tilde{q})		     \} ,
		\end{split}
	\end{displaymath}
	that is, we have
	\begin{align}\label{Contradic-2}
		& d_s(a_r,\tilde{a_r}) + d_u(\tilde{a_r},{\tilde{a_r}}')
		+ d_s({\tilde{a_r}}',\tilde{e_r})
		+ d_u(\tilde{e_r},e_r)\nonumber\\
		& \leqslant  2Q\cdot\lambda^{k_n} d_s(p,\tilde{p})
		+ 2Q\cdot\lambda^{l_n}d_u(q,\tilde{q})
		\triangleq R_1^n+R_2^n.
	\end{align}
	Here, note that the $A$-invariant foliations
	$\mathcal{L}^u$ and $\mathcal{L}^s$ on $\Gamma_0$ are parallel lines,
	and recall that the uniform constant $Q$
	comes from the choices of $k_0^s$ and $l_0^u$ above.
	
	By the partial hyperbolicity of $F$, we have $\lambda<\gamma$,
	so we will get a contradiction when taking $k_n,l_n$ large enough.
	In fact, by the choices of $k_0^s$ and $l_0^u$, we have
	\begin{displaymath}
		d_s(p,\tilde{p})<d_c(b,\phi_p(b))
		\quad \text{and} \quad
		d_u(q,\tilde{q})<d_c(d,\phi^{-1}_q(d)).
	\end{displaymath}
	Note that $\lambda^n/\gamma^n\to 0$ and
	these constants $\rho,K_1,K_2,Q$ are all independent of $k_n,l_n$,
	so we can take $k_n,l_n$ large enough such that
	\begin{displaymath}
		D_i^n\geqslant 2\cdot R_i^n
		\quad \text{for} \quad i=1,2.
	\end{displaymath}
	Thus, from Inequalities \eqref{Intersection-bd+1-left},
	\eqref{Intersection-bd+1-right} and \eqref{Contradic-2},
	we will get
	\begin{displaymath}
		\begin{split}
			&d_s(a_r,\tilde{a_r}) + d_u(\tilde{a_r},{\tilde{a_r}}')
			+ d_s({\tilde{a_r}}',\tilde{e_r}) + d_u(\tilde{e_r},e_r)\\
			&\leqslant R_1^n + R_2^n
			\leqslant 2\cdot\max\limits_{i=1,2}\{R_i^n\}
			\leqslant \max\limits_{i=1,2}\{D_i^n\}
			< d_c(a_r,d_r),
		\end{split}
	\end{displaymath}	
	which exactly contradicts Inequality \eqref{Contradic+0}.
	So, Assumption \eqref{Contradic-0} does not hold and we obtain
	\begin{displaymath}
		F^{k_n}(U_0) \cap F^{-l_n}(V_0) \neq\varnothing.
	\end{displaymath}
	
	Hence, for the given $U,V$, by taking some $k_n,l_n$ large enough,
	we obtain the desired positive integer
	\begin{displaymath}
		m=k_n+k_0^s+l_n+l_0^u,
	\end{displaymath}
	such that
	\begin{displaymath}
		F^{m}(U) \cap V \neq\varnothing.
	\end{displaymath}
	
	This ends the proof of Theorem \ref{main-thm}.
\end{proof}

\section*{Acknowledgments}
This work is supported by
China Postdoctoral Science Foundation (2023M731527)
and National Key R\&D Program of China (2021YFA1001900).
And the author is thankful for the support from Ra\'{u}l Ures and NNSFC 12071202 and 12161141002.
The author also appreciates the referees for their valuable advice to improve the presentation of this paper.

\bibliographystyle{plain}
\bibliography{Bib-Kan2023}

\end{document}